\title{The \LaTeX\  template file for an article in the
       $\Pi$ME Journal}
\author{Primus Scriber\thanks{
                  College of the Enlightenment}
        \and
        Theeco Author\thanks{
                      The Virtual University
                  }\thanks{This work was
                  supported by NSB grant number G983578765401.}
        }
\newtheorem{theorem}{Theorem}
\newtheorem{lemma}{Lemma}
\newtheorem{defn}{Definition}
\newtheorem{corollary}{Corollary}
\newtheorem{algo}{Algorithm}
\newenvironment{keywords}{}{}
\title{A Lower Bound on the Failed Zero Forcing Number of a Graph}
\author{Eric Ufferman\thanks{
                  Department of Mathematics, Virginia Tech University \quad ericu1@vt.edu}
        \and
        Nicolas Swanson\thanks{
                      Virginia Tech University \quad nicswanson@vt.edu
                  }
        }
\begin{document}
%
%
%

%
%
 
%
\maketitle              
\begin{abstract}

Given a graph $G=(V,E)$ and a set of vertices marked as filled, we consider a color-change rule known as zero forcing. A set $S$ is a zero forcing set if filling $S$ and applying all possible instances of the color change rule causes all vertices in  $V$ to be filled. A failed zero forcing set is a set of vertices that is not a zero forcing set. Given a graph $G$, the failed zero forcing number $F(G)$ is the maximum size of a failed zero forcing set. In \cite{fetcie}, the authors asked whether given any $k$
 there is a an $\ell$ such that all graphs with at least $\ell$ vertices must satisfy $F(G)\geq k$. We answer this question affirmatively by proving that for a graph $G$ with $n$ vertices, $F(G)\geq \lfloor\frac{n-1}{2}\rfloor$.

\end{abstract}

\begin{keywords}
\textbf{Keywords:} Failed zero forcing
\end{keywords}
\section{Introduction}

Given a graph $G=(V,E)$ and a subset $S\subseteq V$ of filled vertices, we consider the following the color change rule. If any filled vertex $v$ is adjacent to exactly one unfilled vertex $w$, then $v$ ``forces'' $w$, and  we mark $w$ as filled. The set of filled vertices after all possible instances of the rule are applied is called the derived set of $S$. If the derived set of $S$ is all of $V$, we say that $S$ is a zero forcing set for $G$. Otherwise it is a failed zero forcing set. The zero forcing number of a graph $G$, denoted $Z(G)$ is the minimum size of any zero forcing set of $G$. The failed zero forcing number $G$, denoted $F(G)$ is the maximum size of any failed zero forcing set of $G$. Zero forcing numbers and their relation to minimum rank problems have been studied extensively, for example in \cite{barioli}. Failed zero forcing numbers were introduced and studied in \cite{fetcie}.

Any path $P_n$ satisfies $Z(P_n)=1$. This can be seen by taking the initial set of filled vertices to consist of exactly one end vertex of the path. The end vertex forces its neighbor, which in turn forces its unfilled neighbor, and so on until the entire path is filled. Hence there are graphs with arbitrarily large vertex set whose zero forcing number is 1. 

In \cite{fetcie}, the authors asked whether a similar property holds for the failed zero forcing number. Intuition tells us that for any fixed integer $k$ there should not be arbitrarily large graphs satisfying $F(G)=k$. The larger the graph, the more room there is to distribute a fixed number of filled vertices sparsely in such a way that no filled vertex can force. We confirm this intuition by proving that for any graph $G$ on $n$ vertices $F(G)\geq\lfloor \frac{n-1}{2}\rfloor$. This is the best bound possible, as it was shown in \cite{fetcie} that any path $P_n$  satisfies
$F(P_n)=\lfloor\frac{n-1}{2}\rfloor$. Strikingly, for any given number of vertices, the minimum possible zero forcing number and maximum possible failed zero forcing number are both obtained by paths. 

The bound makes it possible in principle to classify all graphs with a given failed zero forcing number $k$ by exhaustively checking all graphs such that $n\leq 2k + 2$. This classification was done in \cite{fetcie} for $k=0,1$, and in \cite{gomez} for $k=2$, the latter using a special case of the bound proven here.

In what follows we prove our main result by showing a somewhat stronger result for graphs with minimum vertex degree of at least three. We then show the desired bound inductively for graphs with minimum degree one and two.

\section{Graphs with minimum degree at least three}

The main result of this section is the following lower bound on the failed zero forcing number of a graph of minimum degree three or more:

\begin{theorem}\label{delta3}
Let $G=(V,E)$ be a connected graph on $n$ vertices satisfying $\delta(G)\geq 3$. Then $F(G) \geq \lfloor \frac{n}{2}\rfloor$. If $G$ contains a circuit of even length (and $n$ is odd), we can improve the bound  to $F(G) \geq \lceil\frac{n}{2}\rceil$.
\end{theorem}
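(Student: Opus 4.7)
The standard reformulation is that a subset $S \subseteq V$ is a failed zero forcing set if and only if $T := V\setminus S$ is nonempty and no vertex of $V\setminus T$ has exactly one neighbor in $T$. Call such a $T$ a \emph{fort}; then the theorem is equivalent to the existence of a fort of size at most $\lceil n/2\rceil$, improved to $\lfloor n/2\rfloor$ when $G$ has an even circuit and $n$ is odd. The plan is to construct such a fort in two phases.

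In the first phase I would use $\delta(G)\ge 3$ to locate a well-behaved cycle $C$ in $G$: for example, take a longest path $p_0\cdots p_k$ and observe that maximality together with $\delta(G)\ge 3$ forces all $\ge 3$ neighbors of $p_0$ to lie on the path, yielding a cycle of length at least $4$ through $p_0$. When $G$ contains an even circuit I would choose $C$ to be even. Then I $2$-color $V(C)$ alternately and place one color class --- of size $\lfloor |C|/2\rfloor$, and of size exactly $|C|/2$ when $|C|$ is even --- into the initial fort $T_0$. Along $C$, every vertex outside $T_0$ has both cycle-neighbors inside $T_0$, so the fort condition is already satisfied on $C$.

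In the second phase I enlarge $T_0$ to a fort of all of $G$ by the forced-closure rule: while some $v \notin T$ has exactly one neighbor in $T$, add $v$ to $T$. The output is automatically a fort. The core of the proof is to bound $|T| \le \lceil n/2\rceil$, and here I would use a charging argument. Each time the closure adds a vertex $v$, the hypothesis $\delta(G)\ge 3$ guarantees that $v$ has at least two neighbors outside $T$ besides its unique $T$-neighbor, and one of these can be designated as a \emph{witness} paired with $v$. Running the closure in a carefully chosen BFS-like sweep out of $C$ should keep witnesses out of $T$, producing an injection from $T\setminus T_0$ into $V\setminus T$ that forces $|T|\le \lceil n/2\rceil$. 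The sharper $\lfloor n/2\rfloor$ bound in the even-circuit/odd-$n$ case would come from starting with $|T_0|=|C|/2$ exactly, saving one vertex in the parity count.

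The main obstacle will be controlling cascades during closure: adding one vertex $v$ to $T$ can push several of its neighbors past the ``exactly one $T$-neighbor'' threshold and sweep them into $T$ in turn, and the pairing must survive this. The $\delta(G)\ge 3$ slack should suffice --- each absorbed vertex still leaves behind $\ge 2$ candidate witnesses, allowing re-pairing when a witness is captured --- but making this accounting rigorous, particularly for graphs whose minimum degree is exactly three and whose blocks are densely interconnected, will be where the bulk of the work lies.
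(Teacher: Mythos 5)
Your reformulation in terms of forts is sound (a largest failed zero forcing set is the complement of a smallest nonempty fort), and seeding $T_0$ with one color class of a cycle is essentially the paper's opening move. But the heart of your argument --- that the closure of $T_0$ under ``absorb any vertex with exactly one $T$-neighbor'' can be scheduled so that an injection from $T\setminus T_0$ into $V\setminus T$ survives --- is precisely the step you leave unproved, and it is not a routine verification. The one-sided closure is genuinely order-sensitive: in the cube graph $Q_3$, with faces $c_1c_2c_3c_4$ and $d_1d_2d_3d_4$ and $c_i$ adjacent to $d_i$, start from $T_0=\{c_1,c_3\}$ and process violations in the order $d_1$, then $d_2$, then $d_4$; the closure terminates at the fort $\{c_1,c_3,d_1,d_2,d_4\}$ of size $5>n/2$, so the complementary stalled set has only $3$ vertices. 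Thus the ``carefully chosen BFS-like sweep'' is carrying the entire proof, and the re-pairing claim can fail outright: every neighbor of an absorbed vertex may itself eventually be absorbed, leaving that vertex with no candidate witness among its neighbors. A further warning sign is that your sketch uses no connectivity hypothesis beyond locating a cycle, whereas cut vertices and near-cuts are exactly where a local charging scheme breaks down.

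The paper closes this gap by building two forts simultaneously rather than one. It partitions $V$ into $L\sqcup R\sqcup O$ with $|O|\le 1$ so that every vertex of $L$ has at least two neighbors in $R\cup O$ and every vertex of $R$ has at least two neighbors in $L\cup O$; then filling either side yields a stalled set, and the larger side has at least $\lfloor n/2\rfloor$ vertices by pigeonhole, so no charging or injection argument is needed --- the balance is automatic. Maintaining this two-sided invariant is where connectivity does real work: the case $\kappa(G)=1$ is disposed of separately in Lemma \ref{cut_vert}, and for $\kappa(G)\ge 2$ the algorithm uses a second, disjoint return path to the already-labeled set in order to extend the alternating labeling consistently (cases 3 and 4 of Algorithm \ref{algo}). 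Note that the paper's fix for a troublesome vertex is not to absorb it into the growing fort but to route a path back to the labeled set and alternate labels along it, giving the vertex a second opposite-side neighbor; if you try to make your one-sided accounting rigorous you will likely be forced to reinvent exactly this device.
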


\tikzset{
unfilled/.style = {circle, draw = black, minimum size = 0.5cm},
filled/.style = {unfilled, fill = blue!40, minimum size = 0.5cm},
collection/.style = {ellipse, draw = black, thick}
}

We develop  machinery needed to prove this result, starting with some terminology. If a set of vertices $S\neq V$ is its own derived set, then we say that $S$ is stalled. Note that any maximal failed zero forcing set is stalled. We will distinguish between vertices that cannot force because all of their neighbors are filled and vertices that cannot force because they have multiple unfilled neighbors.
\begin{defn}
Let $G=(V,E)$, $S\subseteq V$ be a set of filled vertices, and  $v\in S$. We say $v$ is \emph{spent} if all neighbors of $v$ are in $S$. Otherwise, $v$ is \emph{unspent}.
\end{defn}

\noindent\textbf{Observation:} If we have a maximal failed zero forcing set $S$ in $G$ in which all vertices are unspent, then all filled vertices are adjacent to at least two unfilled vertices. In this case, we can add any number of edges to $G$, and $S$ will still be a failed zero forcing set. This is a special case of lemma 3 in \cite{gomez}.

We next have a result about  bipartite graphs.

\begin{lemma}\label{bipartite}
If $G=(V,E)$ is a bipartite graph with bipartition $V=L\sqcup R$, and $\delta(G)\geq 2$, then $F(G)\geq\max\{|L|,|R|\}$. In particular, $F(G)\geq\lceil\frac{n}{2}\rceil$.
\end{lemma}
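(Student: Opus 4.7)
The plan is to take the larger side of the bipartition as our candidate failed zero forcing set. Without loss of generality, assume $|L| \geq |R|$, and let $S = L$. Since $G$ is bipartite, every vertex of $L$ has all of its neighbors in $R$, which means every vertex of $L$ has zero filled neighbors and, by $\delta(G) \geq 2$, at least two unfilled neighbors. Hence no vertex in $L$ can force (a vertex forces only if it has exactly one unfilled neighbor). Vertices in $R$ are unfilled and therefore cannot force either.

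Next I would observe that $R$ is nonempty (otherwise $G$ would have no edges, contradicting $\delta(G) \geq 2$), so $S \neq V$. Combined with the previous paragraph, the derived set of $S$ is $S$ itself, so $S$ is a stalled and hence failed zero forcing set. This gives
\[
F(G) \geq |S| = |L| = \max\{|L|,|R|\}.
\]
Finally, since $|L| + |R| = n$ and $|L| \geq |R|$, we have $|L| \geq \lceil n/2 \rceil$, which yields the second stated inequality.

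There is no real obstacle here: the bipartite hypothesis immediately rules out any filled-to-filled forcing from within $L$, and the degree hypothesis $\delta(G)\geq 2$ rules out the only remaining way a vertex of $L$ could force (having a unique unfilled neighbor). The argument is essentially a direct verification that $L$ is stalled.
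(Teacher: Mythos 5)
Your proof is correct and follows essentially the same approach as the paper: fill one side of the bipartition (the larger one), and observe that $\delta(G)\geq 2$ together with bipartiteness forces every filled vertex to have at least two unfilled neighbors, so the set is stalled. You supply a bit more detail than the paper (checking $S\neq V$ and deriving the $\lceil n/2\rceil$ consequence explicitly), but the argument is the same.
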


\begin{proof}
If we fill all the vertices in, say, $L$, then every filled vertex is adjacent to at least two (unfilled) vertices in $R$, and therefore cannot force. 
\end{proof}

Note that all vertices in the failed zero forcing set obtained in lemma (\ref{bipartite}) are unspent. Combining the lemma with the observation we have:

\begin{corollary}
If $G$ has a bipartite spanning subgraph $H$ such that $\delta(H)\geq 2$, then $F(G)\geq\lceil\frac{n}{2}\rceil$
\end{corollary}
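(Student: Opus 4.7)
The plan is essentially to apply Lemma \ref{bipartite} to $H$ and then invoke the observation to lift the failed zero forcing set from $H$ to $G$.

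First I would fix a bipartition $V = L \sqcup R$ of $H$ with $|L| \geq |R|$, so that $|L| \geq \lceil n/2 \rceil$. Mimicking the proof of Lemma \ref{bipartite}, I would take $S = L$ as the candidate failed zero forcing set. In $H$, every $v \in L$ has $\deg_H(v) \geq 2$, and since $H$ is bipartite all of these neighbors lie in $R$; they are unfilled, so $v$ is unspent in $H$ and has at least two unfilled neighbors.

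Next I would argue that $S$ remains stalled in the larger graph $G$. Since $H$ is a spanning subgraph of $G$, we have $V(H)=V(G)$ and $E(H)\subseteq E(G)$, so passing from $H$ to $G$ only \emph{adds} edges. In particular, for each $v \in L$, its at-least-two $H$-neighbors in $R$ remain neighbors in $G$, and they remain unfilled (they lie in $R$, and $S=L$ is unchanged). Hence in $G$ every $v \in S$ still has at least two unfilled neighbors and therefore cannot force. This is precisely the content of the observation applied to $H$: all filled vertices are unspent with multiple unfilled neighbors, so any additional edges are harmless. Since no vertex of $S$ can force, the derived set of $S$ is $S$ itself, i.e.\ $S$ is stalled. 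Because $\delta(H)\geq 2$ forces $R\neq\emptyset$, we have $S=L\subsetneq V$, so $S$ is a genuine failed zero forcing set of $G$.

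Putting this together gives $F(G) \geq |S| = |L| \geq \lceil n/2 \rceil$. There is no real obstacle here; the only subtlety to be careful about is verifying that the unfilled-neighbor property of each $v \in L$ really does survive the addition of edges, and that $S \neq V$ so that we get a \emph{failed} zero forcing set rather than vacuously all of $V$.
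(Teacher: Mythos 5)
Your proposal is correct and follows exactly the route the paper intends: apply Lemma \ref{bipartite} to $H$ to get the filled side $L$ with every vertex having at least two unfilled neighbors, then observe that adding the edges of $G\setminus H$ cannot destroy this property, so $L$ remains stalled in $G$. The paper states the corollary without written proof as an immediate consequence of the lemma and the observation, and your argument is a careful spelling-out of precisely that combination.
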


A vertex cut in a connected graph $G$ is a set of vertices whose
removal causes $G$ to be disconnected or trivial. The connectivity of a connected graph $G$, denoted $\kappa(G)$, is the size of the smallest vertex cut.

We need the following:

\begin{lemma}\label{cut_vert}
Let $G=(V,E)$ be a connected graph on $n$ vertices satisfying $\delta(G)\geq 3$ and $\kappa(G)=1$. Then $F(G)\geq \lfloor\frac{n+1}{2}\rfloor$

\end{lemma}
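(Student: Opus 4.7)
The plan is to bypass the cut-vertex structure entirely and derive the bound directly from the Corollary, by producing a bipartite spanning subgraph of $G$ with minimum degree at least $2$. Since the Corollary then gives $F(G)\geq\lceil n/2\rceil$, and $\lceil n/2\rceil = \lfloor (n+1)/2\rfloor$ for every integer $n$, this is precisely the claim.

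To produce such a subgraph, I will take a maximum cut of $G$: a bipartition $V=A\sqcup B$ maximizing the number of edges between $A$ and $B$. I will then argue, via the standard swapping argument, that maximality forces every vertex $w$ to have at most $\lfloor\deg(w)/2\rfloor$ neighbors on its own side (otherwise moving $w$ to the opposite part strictly increases the cut), and hence at least $\lceil\deg(w)/2\rceil$ neighbors on the opposite side; invoking $\delta(G)\geq 3$, this lower bound is at least $2$. Consequently the bipartite spanning subgraph $H$ whose edges are exactly the cross-edges of the cut satisfies $\delta(H)\geq 2$, and the Corollary immediately yields $F(G)\geq\lceil n/2\rceil$.

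The only verification required is the max-cut swapping argument, which is completely standard, so I do not expect any real obstacle. I note that this plan uses only $\delta(G)\geq 3$ and makes no use of the connectivity hypothesis $\kappa(G)=1$; in fact it establishes the same bound for every connected graph of minimum degree at least $3$. Presumably $\kappa(G)=1$ is retained in the statement only because the lemma will be applied as the cut-vertex sub-case in the proof of Theorem~\ref{delta3}, where that structural assumption is natural to state even though it is not needed for the bound itself.
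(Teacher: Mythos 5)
Your proof is correct, but it takes a genuinely different route from the paper's. The paper uses the hypothesis $\kappa(G)=1$ directly: it deletes a cut vertex $v$, fills $v$ together with every component of $G-v$ except the smallest one, and checks that forcing stalls inside the smallest component (the role of $\delta(G)\geq 3$ is only to guarantee that the one vertex $w$ that $v$ might force still has at least two unfilled neighbors). Your argument instead manufactures a bipartite spanning subgraph of minimum degree at least two via a maximum cut and invokes the corollary to Lemma~\ref{bipartite}; the swapping argument is standard and sound, and the arithmetic $\lceil n/2\rceil=\lfloor(n+1)/2\rfloor$ is right, so the bound follows. What your approach buys is considerable: as you note, it never uses $\kappa(G)=1$, so it actually proves $F(G)\geq\lceil n/2\rceil$ for \emph{every} connected graph with $\delta(G)\geq 3$. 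That subsumes not only this lemma but also the lengthy Algorithm~\ref{algo} handling the $\kappa(G)\geq 2$ case, and it yields the stronger conclusion of Theorem~\ref{delta3} unconditionally, without the even-circuit proviso. What the paper's cut-vertex argument buys in exchange is a potentially larger failed zero forcing set when the components of $G-v$ are unbalanced (everything outside the smallest component can far exceed $\lceil n/2\rceil$), though the lemma as stated does not claim this. Your proof stands as a cleaner and more general replacement for the paper's.
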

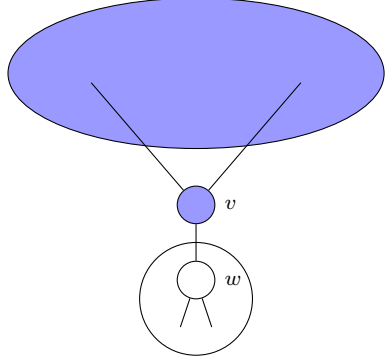
\begin{figure}[H]
    \caption{The graph $G$ with the vertices in $S \cup \{v\}$ colored.}
    \label{fig:cut_vert}
    \begin{tikzpicture}
        \draw[fill = blue!40] (0, 0) ellipse(2.5cm and 1cm);
        \draw (0, -3) ellipse(0.75cm and 0.75cm);

        \node[filled, label=right:{$v$}] (v) at (0, -1.75){};
        \node (d2) at (-1.5, 0){};
        \node (d3) at (1.5, 0){};
        \node (d1) at (0, -3){};
        \node[unfilled, label=right:{$w$}] (w) at (0, -2.75){};
        \node (w1) at (-.25, -3.5){};
        \node (w2) at (.25, -3.5){};
    
        \draw (v) -- (w);
        \draw (v) -- (d2);
        \draw (v) -- (d3);
        \draw (w) -- (w1);
        \draw (w) -- (w2);
    \end{tikzpicture}
    \centering
\end{figure}
 
\begin{proof}
Since $\kappa(G) = 1$ there is a vertex cut consisting of a single  vertex $v$. Let $G'$ be the graph induced by $V-\{v\}$. Since $G'$ is disconnected, we can label $k$ connected components of $G'$, $D_1, D_2,...,D_k$ with corresponding vertex sets $Y_1, Y_2,...,Y_k$, in order of increasing size. 
Let $S = \bigcup_{i = 2}^k Y_i$. Note that $|S| + 1 \ge \lfloor \frac{n+1}{2} \rfloor$.
  
The only vertex in $V - D_1$  adjacent to a vertex in $D_1$ is $v$. No vertex in $S$ can force a vertex in $D_1$, so we need only consider the color changing rule applied to $v$. If $v$ is adjacent to at least two vertices in $D_1$, then $S \cup \{v\}$, is stalled. If $v$ is adjacent to exactly one vertex in $D_1$, call it $w$ (see Figure \ref{fig:cut_vert}). Since $\delta(G) \ge 3$, $w$ has at least two neighbors in $D_1$. The derived set of $S \cup \{v\}$ would then include $w$, but not $w$'s neighbors in $D_1$ since there would be at least two of them. In either case, the derived set of $S \cup v$ excludes vertices in $D_1$, making it a failed zero forcing set. Thus, $F(G) \ge |S \cup \{v\}| \ge |S| + 1 \ge \lfloor \frac{n+1}{2} \rfloor$. 
 
 \end{proof}
 
 Now we need to handle the case where $G$ has no cut vertices. We give an algorithm for finding a failed zero forcing set with at least $\lfloor \frac{n-1}{2}\rfloor$ vertices in any graph $G$ satisfying $\delta(G)\geq 3$ and $\kappa(G)\geq 2$. The algorithm for produces a stalled zero forcing set of the required size, all of whose vertices are unspent.

\begin{algo}\label{algo}     Our strategy will be to partition $V$ into  three subsets $L$, $R$ and $O$ such that any vertex in $L$ is adjacent to at least two vertices in $R\cup O$, and any vertex in $R$ is adjacent to at least two vertices in $L\cup O$. We may then obtain a stalled set by filling either all the vertices in $L$, or all of the vertices in $R$. We will have that $|O|= 0$ or $|O|=1$, so that we are guaranteed to be able to fill at least $\lfloor \frac{n-1}{2}\rfloor$  vertices by choosing the larger of $L$ and $R$. In what follows, we refer to $L$ and $R$ as the ``sides'' of the partition. At any stage of the algorithm we let $A=L\cup R\cup O$ be the set of vertices that have already been assigned to one set in the partition, and we use $U$ the set of all vertices that remain unassigned.

Since $\delta(G)\geq 3$, $G$ has at least one cycle. If $G$ has a cycle $C
$ of even length, then assign alternating vertices of $C$ to $L$ and $R$ and let $O=\emptyset$. This guarantees that all assigned vertices are adjacent to two vertices on the other side of the partition.
If $G$ has no even cycles, then choose any odd cycle $C$ and let $O=\{v_0\}$ for an arbitrary chosen $v_0\in C$. Starting from either neighbor of $v_0$ in $C$ assign alternating vertices to $L$ and $R$. The neighbors of $v_0$ are each adjacent to a vertex in $O$ and a vertex on the other side of the partition, and all other vertices in $C$ are adjacent to two vertices on the other side of the partition.

Now, while any vertices remained unassigned we find a new vertex or vertices to assign by checking the following conditions in order of priority:

\begin{enumerate}
\item If there is any vertex $v$  that is adjacent to at least two vertices in $L\cup O$, we assign  $v$ to $R$.
\item If there is any   $v$ that is adjacent to at least two vertices in $R\cup O$, we assign   $v$ to $L$.
\item If there is any $v$ that is adjacent to exactly two vertices $w_1,w_2$ in $A$, where $w_1\in L$ and $w_2\in R$, we proceed as follows. Note that there must be a path $u_0,u_1,\ldots,v$ from some other vertex $u_0\in A$ to $v$, or else $v$ would be a cut vertex. We may assume the interior vertices of the path are in $U$. We assign $u_1$ to the opposite side of the partition as $u_0$ (or either side if $u_0\in O$), and proceed to alternate assignments until we reach (and assign) $v$. Interior assigned vertices are assigned two neighbors on the opposite side of the partition. The vertex $v $ itself has one neighbor that was just assigned to the opposite side of the partition. Together with one of its previously assigned neighbors, it now has two neighbors with the opposite assignment as itself.

\begin{figure}[H]
  \caption{Case 3 of Algorithm. A red label indicates the label was newly assigned in this stage. }\label{case3}
  \centering
    \begin{tikzpicture}
\tikzstyle{vertex} = [circle, draw=black]
\tikzstyle{filled_vertex} = [circle, draw=black, fill = blue!40]

\draw(0,0) ellipse(3.5cm and 1.75cm);

\node (v0) at (0,1){$A$};
\node[vertex,label=above:{$w_1$}] (v1) at (-1.5,-1){\tiny L};
\node[vertex,label=above:{$w_2$}] (v2) at (-.5,-1){\tiny R};
\node[vertex,text=red,label=below:{$v$}] (v3) at (-1,-2.25){\tiny R};
\node[vertex,text=red] (v4) at (0,-2.25){\tiny L};
\node[vertex,text=red] (v5) at (1,-2.25){\tiny R};
\node[vertex,text=red,label=below:{$u_1$}] (v6) at (2,-2.25){\tiny L};
\node[vertex,label=above:{$u_0$}] (v7) at (2,-.75){\tiny R};

\draw (v1) -- (v3);
\draw (v2) -- (v3);
\draw (v3) -- (v4);
\draw (v4) -- (v5);
\draw (v6) -- (v5);
\draw (v6) -- (v7);

\end{tikzpicture}
\end{figure}
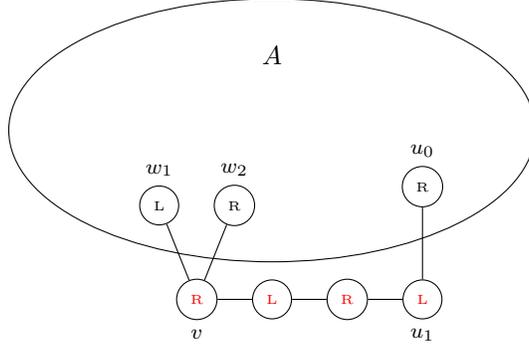

\item If none of the previous three cases apply at any stage, we have that all vertices in $U$ that have adjacencies in $A$ have exactly one neighbor in $A$. Let $H$ be the subgraph induced by $U$. Note that  $\delta(H)\geq 2$, so $H$ contains a cycle. If $H$ contains a cycle of even length, then we may simply assign vertices in the cycle to alternating sides of the partition, as in the initial step of the algorithm. So we may assume that all cycles have odd length. Choose any cycle $C$ in $H$. Let $P$ be a path from any vertex $c_0\in C$ to a vertex $v_U\in U$,  such that $v_u$ is adjacent to a vertex $v_A\in A$. We may choose $P$ so that $P\cap C=\{c_0\}$ and $P\cap A =\{v_A\}$. (It is possible that $c_0=v_U$.) Note that there must be a path $Q$ from some vertex $c_i\in C$,  $c_i\neq c_0$, to a vertex $w_U\in U$ that is adjacent to a vertex $w_A\in A$, or else $c_0$ would be a cut vertex of $G$.  We may also assume $Q\cap C=\{c_i\}$ and $Q\cap A=\{w_A\}$. It is possible that $v_A=w_A$. However, $P\cap Q=\emptyset$, for otherwise we would be able to produce two cycles of opposite parity containing $v_U$ using $P$, $Q$ and the two paths from $c_0$ to $c_i$ in $C$, and hence $U$ would contain an even cycle. We have two paths from $v_U$ to $w_U$ in $H$ via $c_0$ and $c_i$, taking opposite paths around $C$. Since $C$ is of odd length, the paths' lengths have opposite parity. If $v_A$ and $w_A$ are on opposite side of the partition, then give $v_U$ the opposite label of $v_A$, and proceed along the even-length path alternating labels.  If $v_A$ and $w_A$ are on opposite side of the partition, then give $v_U$ the opposite label of $v_A$, and proceed along the odd-length path alternating labels. 
\end{enumerate}

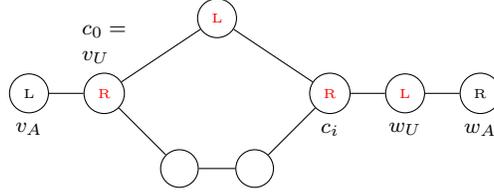
\begin{figure}[H]
  \caption{Case 4 of Algorithm. In the event that $v_A$ and $w_A$ have opposite labels, we alternate labels along an odd-length path  $v_A$ and $w_A$  }
  \centering
    \begin{tikzpicture}
\tikzstyle{vertex} = [circle, draw=black, minimum size = .5cm]
\tikzstyle{filled_vertex} = [circle, draw=black, fill = blue!40]

\node[vertex,label=below:{$v_A$}] (v1) at (0,0){\tiny L};
\node[vertex,text=red,label={[align=left]$c_0=$\\$v_U$}] (v2) at (1,0){\tiny R};
\node[vertex,text=red] (v3) at (2.5,1){\tiny L};
\node[vertex,text=red] (v4) at (2,-1){};
x\node[vertex,text=red] (v5) at (3,-1){};
\node[vertex,text=red,label=below:{$c_i$}] (v6) at (4,0){\tiny R};
\node[vertex,text=red,label=below:{$w_U$}] (v7) at (5,0){\tiny L};
\node[vertex,label=below:{$w_A$}] (v8) at (6,0){\tiny R};

\draw (v1)--(v2);
\draw (v2)--(v3);
\draw (v2)--(v4);
\draw (v4)--(v5);
\draw (v6)--(v5);
\draw (v3)--(v6);
\draw (v7)--(v6);
\draw (v7)--(v8);

\end{tikzpicture}
\end{figure}

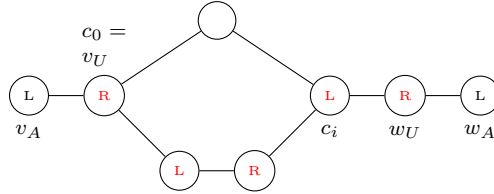
\begin{figure}[H]
  \caption{Case 4 of Algorithm. In the event that $v_A$ and $w_A$ have the same label, we alternate labels along an even-length path  $v_A$ and $w_A$  }
  \centering
    \begin{tikzpicture}
\tikzstyle{vertex} = [circle, draw=black, minimum size = .5cm]
\tikzstyle{filled_vertex} = [circle, draw=black, fill = blue!40]

\node[vertex,label=below:{$v_A$}] (v1) at (0,0){\tiny L};
\node[vertex,text=red,label={[align=left]$c_0=$\\$v_U$}] (v2) at (1,0){\tiny R};
\node[vertex,text=red] (v3) at (2.5,1){};
\node[vertex,text=red] (v4) at (2,-1){\tiny L};
\node[vertex,text=red] (v5) at (3,-1){\tiny R};
\node[vertex,text=red,label=below:{$c_i$}] (v6) at (4,0){\tiny L};
\node[vertex,text=red,label=below:{$w_U$}] (v7) at (5,0){\tiny R};
\node[vertex,label=below:{$w_A$}] (v8) at (6,0){\tiny L};

\draw (v1)--(v2);
\draw (v2)--(v3);
\draw (v2)--(v4);
\draw (v4)--(v5);
\draw (v6)--(v5);
\draw (v3)--(v6);
\draw (v7)--(v6);
\draw (v7)--(v8);

\end{tikzpicture}
\end{figure}

 Any time we make new assignments, we repeat the process, checking which case applies in order. The cases are exhaustive and in any of the cases we can assign labels to more vertices while maintaining the property that any labeled vertex has as least two neighbors each of which either has the opposite label or is labeled $O$. Hence the process will terminate with all vertices assigned a label and the final labeling satisfying the required property.
 \end{algo}

In the case where $G$ contains an even cycle, note that $O=\emptyset$, and all vertices  are labeled either $L$ or $R$ upon termination. Therefore $F(G)\geq \lceil\frac{n}{2}\rceil$ by Lemma \ref{bipartite}.

 Lemma \ref{cut_vert} and Algorithm \ref{algo} exhaust all possibilities for connected graphs with minimum degree at least three. Hence, we have proven Theorem \ref{delta3}.

\section{The General Case}

Next we handle the case where $G$ has vertices of degree less than three with an inductive argument.

\begin{theorem}
Let $G=(V,E)$ be a graph on $n$ vertices. $F(G)\geq \lfloor\frac{n-1}{2}\rfloor$
\end{theorem}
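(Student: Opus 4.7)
My plan is to induct on $n$. The cases $n\leq 2$ are trivial since $\lfloor(n-1)/2\rfloor\leq 0$, and if $\delta(G)\geq 3$ the bound $F(G)\geq\lfloor n/2\rfloor\geq\lfloor(n-1)/2\rfloor$ follows immediately from Theorem~\ref{delta3}. The substance is the case $\delta(G)\leq 2$, which I would handle by selecting an appropriate low-degree vertex, removing it (or a small neighborhood of it), applying the inductive hypothesis to the smaller graph, and carefully extending the resulting stalled set back to $G$.

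When $G$ has an isolated vertex $v$, adjoining $v$ to any max stalled set of $G-v$ keeps the set stalled (since $v$ is vacuously spent), giving a set of size $\lfloor(n-2)/2\rfloor+1\geq\lfloor(n-1)/2\rfloor$. For a leaf $v$ with neighbor $w$ of degree one, the pair $\{v,w\}$ is a $P_2$ component, and I would apply induction to $G-\{v,w\}$ and adjoin both vertices as a spent pair, yielding $\lfloor(n-3)/2\rfloor+2\geq\lfloor(n-1)/2\rfloor$. Otherwise let $S'$ be a maximum stalled set in $G-v$: if $w\in S'$, then $S'\cup\{v\}$ is stalled in $G$ (the only vertex whose neighborhood in $G$ differs from its neighborhood in $G-v$ is $w$, and $w$'s unfilled-neighbor count is unchanged since $v$ is filled), giving size $\geq\lfloor n/2\rfloor$; and if $w\notin S'$, the same set $S'$ remains stalled in $G$ because adding the unfilled $v$ affects only $w$'s unfilled count, and $w\notin S'$. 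For a degree-$2$ vertex $v$ with neighbors $a,b$, the parallel analysis handles the cases $\{a,b\}\subseteq S'$ and $\{a,b\}\cap S'=\emptyset$ cleanly.

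The main obstacle is that each of the last two subcases yields only a set of size $\lfloor(n-2)/2\rfloor$, which meets the target when $n$ is even but falls short by one when $n$ is odd (and the analogous shortfall occurs for $\delta=2$ when exactly one of $a,b$ lies in $S'$ and is spent). To close this gap I would pass instead to the reduction $H=G-\{v,w\}$ on $n-2$ vertices; by induction there is a max stalled $S_0$ of size $\geq\lfloor(n-3)/2\rfloor$, and the candidate $S_0\cup\{w\}$ is stalled in $G$ precisely when $w$ has an unfilled neighbor other than $v$, i.e., when some neighbor $y_i$ of $w$ in $H$ lies outside $S_0$. The size is then $\geq\lfloor(n-1)/2\rfloor$ as required. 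The remaining edge case---every max stalled set of $H$ contains all of $w$'s $H$-neighbors---will be the main technical point, and I would resolve it either by a local swap argument (exchanging one $y_i$ for another previously unfilled vertex of $H$ while preserving stalledness), or by arguing that in this configuration $G$ has additional structure (e.g., a separating pair at $\{v,w\}$ whose small components are forced to be simple) that lets us build a larger stalled set directly.
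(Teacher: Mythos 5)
Your overall architecture (induction on $n$, reduce to Theorem \ref{delta3} when $\delta(G)\geq 3$, handle $\delta\in\{1,2\}$ by a local reduction) matches the paper, but two genuine gaps remain, one of which is the crux of the whole argument. First, in the $\delta(G)=1$ case your final edge case --- $S_0$ is a stalled set of $H=G-\{v,w\}$ containing every $H$-neighbor of $w$ --- does not need a swap or a structural argument: simply take $S_0\cup\{v,w\}$. Then $w$ is spent (all its neighbors, namely $v$ and its $H$-neighbors, are filled), $v$ is spent, every vertex of $S_0$ behaves exactly as it did in $H$, and the set is proper because $S_0\neq V(H)$; its size is $\lfloor\frac{n-3}{2}\rfloor+2\geq\lfloor\frac{n+1}{2}\rfloor$. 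This is how the paper closes that case, and without some such resolution your $\delta=1$ argument is incomplete as written.

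Second, and more seriously, your $\delta(G)=2$ case does not go through by deleting $v$ and splitting on $|\{a,b\}\cap S'|$. When exactly one neighbor, say $a$, lies in $S'$ and is spent in $G-v$, the set $S'$ is not even stalled in $G$: $a$ now has the unique unfilled neighbor $v$, forces it, $v$ then forces $b$, and the chain may propagate arbitrarily far, so you cannot certify $S'$ is a failed zero forcing set at all. And in the case $\{a,b\}\cap S'=\emptyset$ you are stuck at size $\lfloor\frac{n-2}{2}\rfloor$, one short when $n$ is odd, with no analogue of the ``add both endpoints'' fix available. The paper's key idea, which your proposal is missing, is to \emph{contract} the path $x$--$v$--$y$ to a single vertex $w$, producing a graph $G'$ on $n-2$ vertices, and then to show by a four-way case analysis on $w$'s status in a stalled set $S'$ of $G'$ (not in $S'$; spent; unspent with unfilled neighbors on both sides; unspent with unfilled neighbors on one side only) that one can always re-expand $w$ into $\{v,x,y\}$ while filling at least one \emph{more} vertex than $|S'|$, giving $\lfloor\frac{n-3}{2}\rfloor+1=\lfloor\frac{n-1}{2}\rfloor$ in every case. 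Without this contraction (or an equivalent device that reduces $n$ by $2$ while guaranteeing a net gain of at least one filled vertex), the degree-two case --- which is where the theorem is actually hard --- remains open in your proposal. A minor additional point: Theorem \ref{delta3} is stated for connected graphs, so you should dispose of disconnected $G$ first (fill everything outside the smallest component) before invoking it.
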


\begin{proof}

If $G$ is disconnected, we may find a failed zero set $S$ satisfying  $|S|\geq \lceil\frac{n}{2}\rceil$ by filling all vertices in all but the smallest connected component of $G$.  If $G$ is connected and $\delta(G)\geq 3$, we know from Theorem \ref{delta3} that the desired inequality holds.  So we only need to consider the cases where $\delta(G)=1$ and $\delta(G)=2$.

We proceed by induction on $n$. If $n=1$, $F(G)=0=\lfloor\frac{n-1}{2}\rfloor$ and the result holds. Now assume the inequality holds for all graphs with fewer than $n$ vertices.

Suppose $\delta(G) = 1$ and let $v \in V$ with $\deg(v) = 1$. Let $v$ be adjacent to $w$ and  let $G'$ be the subgraph induced by $V' = V-\{v,w\}$ .   By the inductive hypothesis,  $F(G') \ge \lfloor \frac{(n - 2) - 1}{2} \rfloor$. Let $S'\subset V'$ be a stalled set that achieves this bound.  

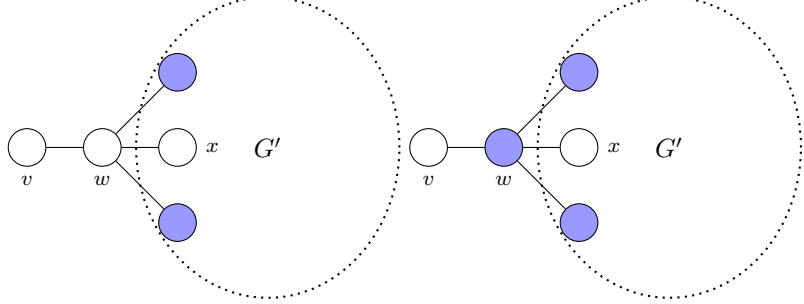
\begin{figure}[H]
    \caption{The graph $G$ with $\delta(G) = 1$. Case where $w$ is adjacent to $x \in V' - S'$.}
    \centering
    \begin{tikzpicture}
        \node[unfilled, label=below:{$v$}] at (0,0) (v) {};
        \node[unfilled, label=below:{$w$}] at (1,0) (w) {};
        \node[filled] at (2, -1) (w_1) {};
        \node[unfilled, label=right:{$x$}] at (2, 0) (w_2) {};
        \node[filled] at (2, 1) (w_3) {};

        \draw (v) -- (w);
        \draw (w) -- (w_1);
        \draw (w) -- (w_2);
        \draw (w) -- (w_3);

        \node[collection, minimum height = 4cm, minimum width = 3.5cm, dotted] at (3.2, 0) (W) {$G'$};
    \end{tikzpicture}
    \begin{tikzpicture}
        \node[unfilled, label=below:{$v$}] at (0,0) (v) {};
        \node[filled, label=below:{$w$}] at (1,0) (w) {};
        \node[filled] at (2, -1) (w_1) {};
        \node[unfilled, label=right:{$x$}] at (2, 0) (w_2) {};
        \node[filled] at (2, 1) (w_3) {};

        \draw (v) -- (w);
        \draw (w) -- (w_1);
        \draw (w) -- (w_2);
        \draw (w) -- (w_3);

        \node[collection, minimum height = 4cm, minimum width = 3.5cm, dotted] at (3.2, 0) (W) {$G'$};
    \end{tikzpicture}
\end{figure}

Suppose $w$ is adjacent to an unfilled vertex $x\in V'$. We claim $S = S' \cup \{w\}$ is stalled in $G$. Consider $y \in S'$. We see $y$ is either only adjacent to vertices in $S' \cup \{w \}$, or it is adjacent to at least 2 unfilled vertices in $G'$. Either way, $y$ cannot force any of its neighbors in $G$. We also see that $w$ cannot force its neighbors since $w$ is adjacent to $v$ and $x$ which are both unfilled. Thus, $S$ is stalled. Note $|S' \cup \{w\}| \ge \lfloor \frac{(n - 2) - 1}{2} \rfloor + 1 = \lfloor \frac{n - 1}{2} \rfloor$.
\begin{figure}[H]
    \caption{The graph $G$ with $\delta(G) = 1$. Case where $w$ is adjacent to only vertices in $S' \cup v$.}
    \centering
    \begin{tikzpicture}
        \node[unfilled, label=below:{$v$}] at (0,0) (v) {};
        \node[unfilled, label=below:{$w$}] at (1,0) (w) {};
        \node[filled] at (2, -1) (w_1) {};
        \node[filled] at (2, 0) (w_2) {};
        \node[filled] at (2, 1) (w_3) {};

        \draw (v) -- (w);
        \draw (w) -- (w_1);
        \draw (w) -- (w_2);
        \draw (w) -- (w_3);

        \node[collection, minimum height = 4cm, minimum width = 3.5cm, dotted] at (3.2, 0) (W) {$G'$};
    \end{tikzpicture}
    \begin{tikzpicture}
        \node[filled, label=below:{$v$}] at (0,0) (v) {};
        \node[filled, label=below:{$w$}] at (1,0) (w) {};
        \node[filled] at (2, -1) (w_1) {};
        \node[filled] at (2, 0) (w_2) {};
        \node[filled] at (2, 1) (w_3) {};

        \draw (v) -- (w);
        \draw (w) -- (w_1);
        \draw (w) -- (w_2);
        \draw (w) -- (w_3);

        \node[collection, minimum height = 4cm, minimum width = 3.5cm, dotted] at (3.2, 0) (W) {$G'$};
    \end{tikzpicture}
\end{figure}
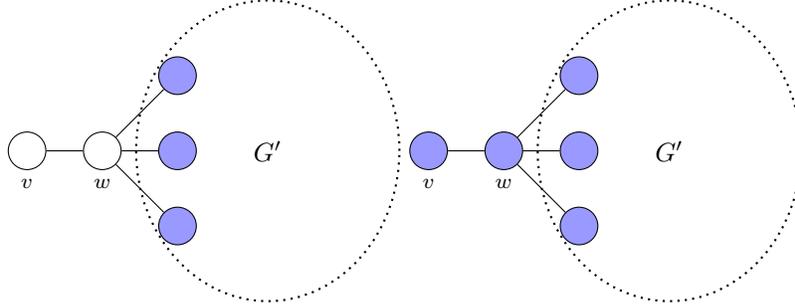
If $w$ is adjacent to only filled vertices in $G'$, we see then $S = S' \cup \{v, w\}$ is stalled. This is because $v$ and $w$ do not cause any forcing in $G'$. Since $S' \ne V'$, we have $S \ne V$. So $S$ is a failed forcing set with $|S| = |S'| + 2 \ge \lfloor \frac{n + 1}{2} \rfloor$.

In either case, we find a stalled set of size at least $\lfloor \frac{n - 1}{2} \rfloor$.

Now let $\delta(G) = 2$. Then there exists some vertex $v \in V$ such that $\deg(v) = 2$. Let $x,y$ be the neighbors of $v$. Let $X$ be the set of all the vertices in $V - \{v\}$  adjacent to $x$. Let $Y$ be the set of all the vertices in $V - \{v\}$ adjacent to $y$.
We construct a graph $G'$ by condensing  the $x-v-y$ subgraph to a single vertex $w$. More precisely, let $G'=(V',E')$ be obtained from the subgraph of $G$ induced by $V-\{v,x,y\}$, by adding a new vertex $w$, and adding the edge $(w,z)$ for each $z\in X\cup Y$.
\begin{figure}[H]
    \caption{Graph $G$ (left) with $\delta(G) = 2$ with corresponding $G'$ (right).}
    \centering
    \begin{tikzpicture}
        \node[unfilled, label=above:{$v$}] at (0,0) (v) {};
        \node[unfilled, label=above:{$x$}] at (-1,-1) (x) {};
        \node[unfilled, label=above:{$y$}] at (1,-1) (y) {};
        \node[] at (-1.5, -2) (x_1) {};
        \node[] at (-0.5, -2) (x_2) {};
        \node[] at (1.5, -2) (y_1) {};
        \node[] at (0.5, -2) (y_2) {};

        \draw (y) -- (v) -- (x);
        \draw (x) -- (x_1);
        \draw (x) -- (x_2);
        \draw (y) -- (y_1);
        \draw (y) -- (y_2);

        \node[collection,  minimum width = 2.5cm, minimum height = 0.75cm] at (-1, -2) (X) {$X$};
        \node[collection,  minimum width = 2.5cm, minimum height = 0.75cm] at (1, -2) (Y) {$Y$};
    \end{tikzpicture}
    \begin{tikzpicture}
        \node[unfilled, label=above:{$w$}] at (0,0) (w) {};

        \node[] at (-1.5, -1) (x_1) {};
        \node[] at (-0.5, -1) (x_2) {};
        \node[] at (1.5, -1) (y_1) {};
        \node[] at (0.5, -1) (y_2) {};

        \draw (w) -- (x_1);
        \draw (w) -- (x_2);
        \draw (w) -- (y_1);
        \draw (w) -- (y_2);
        \node[collection, minimum width = 4cm, minimum height = 0.75cm] at (0, -1) (XY) {$X \cup Y$};
    \end{tikzpicture}
\end{figure}
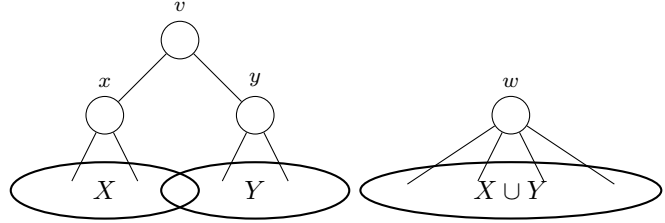

 We inductively assume $F(G') \ge \lfloor \frac{(n - 2) - 1}{2} \rfloor$. Let $S'$ be a corresponding stalled set. We consider the following cases:

\begin{enumerate}
\item
\begin{figure}[H]
    \caption{Case when $w \not \in S'$ and corresponding stalled set in $G$.}
    \centering
    \begin{tikzpicture}
        \node[unfilled, label=above:{$w$}] at (0,0) (w) {};

        \node[] at (-1.5, -1) (x_1) {};
        \node[] at (-0.5, -1) (x_2) {};
        \node[] at (1.5, -1) (y_1) {};
        \node[] at (0.5, -1) (y_2) {};

        \draw (w) -- (x_1);
        \draw (w) -- (x_2);
        \draw (w) -- (y_1);
        \draw (w) -- (y_2);
        \node[collection, minimum width = 4cm, minimum height = 0.75cm] at (0, -1) (XY) {};
    \end{tikzpicture}
    \begin{tikzpicture}
        \node[filled, label=above:{$v$}] at (0,0) (v) {};
        \node[unfilled, label=above:{$x$}] at (-1,-1) (x) {};
        \node[unfilled, label=above:{$y$}] at (1,-1) (y) {};
        \node[] at (-1.5, -2) (x_1) {};
        \node[] at (-0.5, -2) (x_2) {};
        \node[] at (1.5, -2) (y_1) {};
        \node[] at (0.5, -2) (y_2) {};

        \draw (y) -- (v) -- (x);
        \draw (x) -- (x_1);
        \draw (x) -- (x_2);
        \draw (y) -- (y_1);
        \draw (y) -- (y_2);

        \node[collection,  minimum width = 2.5cm, minimum height = 0.75cm] at (-1, -2) (X) {};
        \node[collection,  minimum width = 2.5cm, minimum height = 0.75cm] at (1, -2) (Y) {};
    \end{tikzpicture}
\end{figure}
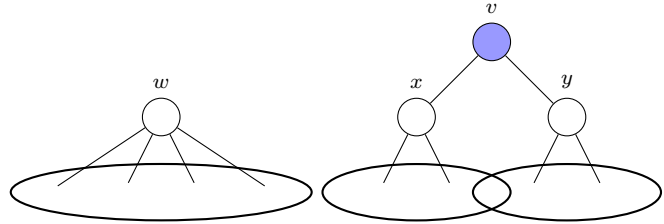
If $w \not \in S'$, then $S = S' \cup \{v\}$ is stalled in $G$. We see that any $z \in S'$ that could not force $G'$ also cannot force in $G$ since $x$ and $y$ are unfilled to match the unfilled $w$. We also see that $v$ cannot zero force in $G$ since $v$ is adjacent to $x,y \not \in S$. Thus, $S$ is a stalled set.
    \item If $w \in S'$, it is either adjacent to only filled vertices or adjacent to at least two unfilled vertices in $G'$.
\begin{enumerate}
    \item
    \begin{figure}[H]
    \caption{Case when $w$ is adjacent to filled vertices in $G'$ and corresponding stalled set in $G$.}
    \centering
    \begin{tikzpicture}
        \node[filled, label=above:{$w$}] at (0,0) (w) {};

        \node[filled] at (-1.5, -1) (x_1) {};
        \node[filled] at (-0.5, -1) (x_2) {};
        \node[filled] at (1.5, -1) (y_1) {};
        \node[filled] at (0.5, -1) (y_2) {};

        \draw (w) -- (x_1);
        \draw (w) -- (x_2);
        \draw (w) -- (y_1);
        \draw (w) -- (y_2);
        \node[collection, minimum width = 4cm, minimum height = 0.75cm] at (0, -1) (XY) {};
    \end{tikzpicture}
    \begin{tikzpicture}
        \node[filled, label=above:{$v$}] at (0,0) (v) {};
        \node[filled, label=above:{$x$}] at (-1,-1) (x) {};
        \node[filled, label=above:{$y$}] at (1,-1) (y) {};
        \node[filled] at (-1.5, -2) (x_1) {};
        \node[filled] at (-0.5, -2) (x_2) {};
        \node[filled] at (1.5, -2) (y_1) {};
        \node[filled] at (0.5, -2) (y_2) {};

        \draw (y) -- (v) -- (x);
        \draw (x) -- (x_1);
        \draw (x) -- (x_2);
        \draw (y) -- (y_1);
        \draw (y) -- (y_2);

        \node[collection,  minimum width = 2.5cm, minimum height = 0.75cm] at (-1, -2) (X) {};
        \node[collection,  minimum width = 2.5cm, minimum height = 0.75cm] at (1, -2) (Y) {};
    \end{tikzpicture}
    \end{figure}
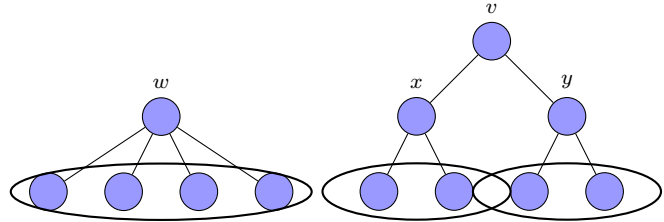
    If $w$ is adjacent to only  filled vertices in $G'$, then $S = (S' - \{w\}) \cup \{v,x,y\}$ is a stalled set in $G$. If a vertex was spent in $S'$, we see it must also be spent in $S$ since both $x$ and $y$ get filled. If a vertex $z$ was adjacent to at least two unfilled vertices in $S'$, it must also be adjacent to at least two unfilled vertices in $S$ as $z$ could not not have been adjacent to $x$ or $y$. Thus, $S$ is stalled.
\item
If $w$ is adjacent to at least two vertices not in $S'$, we will fill $x$ and $y$ in $G$. We first observe that every vertex in $(S' - \{w\})$ in $G$ is spent or adjacent to at least two unfilled vertices. If a vertex $z \in S'$ was adjacent to $w$, then $z$ is adjacent to $x$ or $y$ or both in $G$. Since we fill $x$ and $y$ to match $w$ being filled, we see $z$ cannot force in $G'$. Since $w$ is adjacent to at least two unfilled vertices in $G'$, at least two vertices in $X \cup Y$ are not in $S'$.
\begin{figure}[H]
    \caption{Case when $w$ is adjacent to exactly one unfilled vertex in $X$ and one in $Y$. Corresponding stalled set in $G$.}
    \label{fig:oneXoneY}
    \centering
    \begin{tikzpicture}
        \node[filled, label=above:{$w$}] at (0,0) (w) {};

        \node[filled] at (-1.5, -1) (x_1) {};
        \node[unfilled] at (-0.5, -1) (x_2) {};
        \node[filled] at (1.5, -1) (y_1) {};
        \node[unfilled] at (0.5, -1) (y_2) {};

        \draw (w) -- (x_1);
        \draw (w) -- (x_2);
        \draw (w) -- (y_1);
        \draw (w) -- (y_2);
        \node[collection, minimum width = 4cm, minimum height = 0.75cm] at (0, -1) (XY) {};
    \end{tikzpicture}
    \begin{tikzpicture}
        \node[unfilled, label=above:{$v$}] at (0,0) (v) {};
        \node[filled, label=above:{$x$}] at (-1,-1) (x) {};
        \node[filled, label=above:{$y$}] at (1,-1) (y) {};
        \node[filled] at (-1.5, -2) (x_1) {};
        \node[unfilled] at (-0.5, -2) (x_2) {};
        \node[filled] at (1.5, -2) (y_1) {};
        \node[unfilled] at (0.5, -2) (y_2) {};

        \draw (y) -- (v) -- (x);
        \draw (x) -- (x_1);
        \draw (x) -- (x_2);
        \draw (y) -- (y_1);
        \draw (y) -- (y_2);

        \node[collection,  minimum width = 2.5cm, minimum height = 0.75cm] at (-1, -2) (X) {};
        \node[collection,  minimum width = 2.5cm, minimum height = 0.75cm] at (1, -2) (Y) {};
    \end{tikzpicture}
\end{figure}
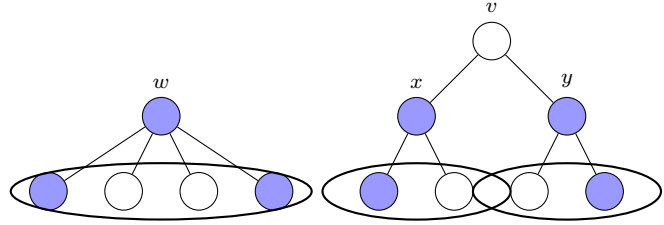
\begin{enumerate}
    \item 
    If there is at least one vertex in $X - S'$ and at least one vertex in $Y - S'$, then both $x$ and $y$ are adjacent to at least two unfilled vertices in $G$ since both $x$ and $y$ are adjacent to some unfilled vertex in $(X \cup Y)$ as well as $v$. This means $S = (S' - \{w\}) \cup \{x, y\}$ is a stalled set in $G$. (As seen in Figure \ref{fig:oneXoneY}).
    \item
    \begin{figure}[H]
    \caption{Case when $w$ is adjacent to at least two unfilled vertices in $X$ or $Y$. Corresponding stalled set in $G$.}
    \centering
    \begin{tikzpicture}
        \node[filled, label=above:{$w$}] at (0,0) (w) {};

        \node[filled] at (-1.5, -1) (x_1) {};
        \node[filled] at (-0.5, -1) (x_2) {};
        \node[unfilled] at (1.5, -1) (y_1) {};
        \node[unfilled] at (0.5, -1) (y_2) {};

        \draw (w) -- (x_1);
        \draw (w) -- (x_2);
        \draw (w) -- (y_1);
        \draw (w) -- (y_2);
        \node[collection, minimum width = 4cm, minimum height = 0.75cm] at (0, -1) (XY) {};
    \end{tikzpicture}
    \begin{tikzpicture}
        \node[filled, label=above:{$v$}] at (0,0) (v) {};
        \node[filled, label=above:{$x$}] at (-1,-1) (x) {};
        \node[filled, label=above:{$y$}] at (1,-1) (y) {};
        \node[filled] at (-1.5, -2) (x_1) {};
        \node[filled] at (-0.5, -2) (x_2) {};
        \node[unfilled] at (1.5, -2) (y_1) {};
        \node[unfilled] at (0.5, -2) (y_2) {};

        \draw (y) -- (v) -- (x);
        \draw (x) -- (x_1);
        \draw (x) -- (x_2);
        \draw (y) -- (y_1);
        \draw (y) -- (y_2);

        \node[collection,  minimum width = 2.5cm, minimum height = 0.75cm] at (-1, -2) (X) {};
        \node[collection,  minimum width = 2.5cm, minimum height = 0.75cm] at (1, -2) (Y) {};
    \end{tikzpicture}
\end{figure}
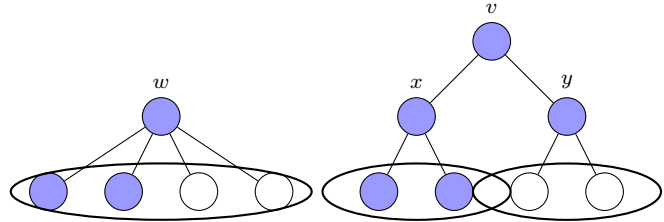
    If $X - S' = \emptyset$ or $Y - S' = \emptyset$, we claim $S = (S' - \{w\}) \cup \{v, x, y\}$ is stalled in $G$. Without loss of generality, assume $X - S' = \emptyset$ and $|Y - S'| \ge 2$. Note $x$ has neighbors $X \cup \{v\}$ in $G$. Since $v \in S$ and $X \subset S$, all of $x$'s neighbors are in $S$. Also, since $x, y \in S$, all of $v$'s neighbors are in $S$. Finally, $y$ has at least two vertices in $V - S$ since $|Y - S'| \ge 2$. Thus, none of $x, y$, or $v$ can force, making $S$ a stalled set.
\end{enumerate}
\end{enumerate}
\end{enumerate}
In any case, we see we can fill at least one more vertex in $G$ than we could $G'$. That is, $|S| \ge |S'| + 1$ and thus $|S| \ge \lfloor \frac{n-1}{2} \rfloor$.
\end{proof}

\section{Conclusions and Future Work}

We now know there are finitely many graphs where $F(G) = k$ for any given $k$ and we can enumerate them by checking all graphs with fewer than $2k + 2$ vertices. For $k\leq 4$ we can do this fairly quickly by computing $F(G)$  for all graphs $G$ with no more than ten vertices.  For $k > 4$ the number of graphs one must check gets very large. 
We let $F_k$ be the set of all connected graphs $G$ such that $F(G) = k$, and $F_k(n)$ be the number of graphs in $F_k$ on $n$ vertices. Using a exhaustive search on all connected graphs up to 10 vertices we get:
\begin{figure}[H]
    \caption{}
    \centering
    \begin{tabular}{c|cccccccc}
     $n =$ & 3& 4& 5& 6& 7& 8& 9& 10  \\
     \hline
     $F_1(n)$ & 2& 1&  &  &  &  &  & \\
     $F_2(n)$ &  & 5& 5& 2&  &  &  & \\
     $F_3(n)$ &  &  &16&29&16& 1&  & \\
     $F_4(n)$ &  &  &  &81&277&268&14& 1\\
\end{tabular}
\end{figure}
The rows of the table correspond to graphs in $F_k$ while the columns correspond to graphs with $n$ vertices. For example, $F_1$ contains three graphs: two with three vertices and one with four vertices. We can sum the rows to get $|F_2| = 12$, $|F_3| = 62$, and $|F_4| = 641$. This confirms the result by \cite{gomez} where it was shown that there are $15$ graphs with a failed zero forcing number of two, twelve of which were connected.


We can apply our lower bound to another question problem posed in $\cite{fetcie}$: the classification of graphs that satisfy $Z(G) = F(G)$. For small $k$ we can enumerate all graphs where $F(G) = k$, and see which of these satisfy $Z(G) = k$. This gives us an exhaustive list of $G$ for which $F(G) = Z(G) = k$. We define $E_k$ to be the set of all connected graphs $G$ such that $F(G) = Z(G) = k$, and   $E_k(n)$ to be the number of graphs on $n$ vertices in $E_k$. We have computed $E_k(n)$ for $1 \le k \le 4$ and have found the following:\\
\begin{figure}[H]\label{ekn}
    \caption{}
    \centering
    \begin{tabular}{c|cccccccc}
     $n = $ & 3 & 4 & 5 & 6 & 7 & 8 & 9 & 10  \\
     \hline
     $E_1(n)$ & 1& 1&  &  &  &  &  & \\
     $E_2(n)$ &  & 4& 4& 1&  &  &  & \\
     $E_3(n)$ &  &  & 9&10& 4&  &  & \\
     $E_4(n)$ &  &  &  &19&29& 2&  & \\
\end{tabular}
\end{figure}
 The columns of the table correspond to graphs with $n$ vertices; the sum across a row is then $|E_k|$.  For example, the pair of $1$s in the first row corresponds to two graphs in $E_1$, one with three vertices and the other with four. These graphs are $P_3$ and $P_4$, the only graphs such that $F(G) = Z(G) = 1$. Furthermore, we see that $|E_1| = 2$, $|E_2| = 9$, $|E_3| = 23$, and $|E_4| = 50$. In \cite{fetcie} it was claimed that graphs where $F(G) = Z(G)$ seem to be rare. The above gives us a quantitative method to evaluate this claim,  by finding the number of graphs with a fixed failed zero forcing number $k$ that have the same zero forcing number. For example, we see that among the 641 graphs with failed zero forcing number of four, exactly 50 have zero forcing number four.

Interesting questions to consider include finding bounds on $|E_k|$ and characterizing $n$ for which $E_k(n)\neq 0$. Toward the latter question, we note $k = F(G) \le n - 2$ for any connected graph $G$ and so any graph with $n$ vertices in $E_k$ must satisfy $n \ge k + 2$. This inequality gives us the lower diagonal in Figure 12 and Figure 13. However, we see an upper diagonal in Figure 13 that is stricter than the one in Figure 12. That is, given a connected graph $G$ with $n$ vertices in $E_k$, we ask whether $n \le k + 4$. We have checked this holds true for all connected graphs $G$ up to ten vertices.
%
%
%


%
\end{document}